\numberwithin{equation}{section}
\newcommand{\calL}{\mathcal{L}}
\newcommand{\mA}{\mathbb{A}}
\newcommand{\mC}{\mathbb{C}}
\newcommand{\mR}{\mathbb{R}}
\newcommand{\mT}{\mathbb{T}}
\newcommand{\mZ}{\mathbb{Z}}
\newtheorem{theorem}{Theorem}[section]
\newtheorem{lemma}[theorem]{Lemma}
\newtheorem{corollary}[theorem]{Corollary}
\newtheorem{proposition}[theorem]{Proposition}
\newtheorem{claim}[theorem]{Claim}
\theoremstyle{definition}
\newtheorem{example}[theorem]{Example}
\theoremstyle{definition}
\newtheorem{definition}[theorem]{Definition}
\theoremstyle{definition}
\newtheorem{notation}[theorem]{Notation}
\begin{document}

\keywords{Riccati equations, Banach algebras, Systems over rings, 
Optimal control, Spatially distributed dynamical systems}

\subjclass{Primary 46J05; Secondary 93D15, 58C15, 47N20}

\title[Riccati equations in Banach algebras]{On Riccati equations in Banach algebras}

\author{Ruth Curtain} \address{Department of Mathematics, University
  of Groningen, P.O. Box 800, 
9700~AV Groningen, The Netherlands.}
\email{R.F.Curtain@math.rug.nl}

\author{Amol Sasane}
\address{Department of Mathematics, Royal Institute of Technology,
    Stockholm 100 44, Sweden.}
\email{sasane@math.kth.se}

\begin{abstract}
Let $R$ be a commutative complex Banach algebra with the involution 
$\cdot ^\star$ and suppose that $A\in R^{n\times n}$, $B\in R^{n\times
  m}$, $C\in R^{p\times n}$.  The question of when the Riccati
equation
$$
PBB^\star P-PA-A^\star P-C^\star C=0
$$
has a solution $P\in R^{n\times n}$ is investigated. A counterexample
to a previous result in the literature on this subject is given,
followed by sufficient conditions on the data guaranteeing the
existence of such a $P$. Finally, applications to spatially
distributed systems are discussed.
\end{abstract}

\maketitle

\section{Introduction}

If $A\in \mC^{n\times n}$, $B\in \mC^{n\times m}$ and $C\in
\mC^{p\times m}$, then the {\em Riccati equation} is
$$
PBB^*P-PA-A^*P-C^*C=0
$$
in the unknown $P\in \mC^{n\times n}$. This is a fundamental equation
associated with the problem of optimal control of linear control
systems with a quadratic cost, and the following is a well known
result about the existence of solutions.

\begin{proposition}
\label{prop1}
  Let $A\in \mC^{n\times n}$, $B\in \mC^{n\times m}$, $C\in
  \mC^{p\times n}$ be such that $(A,B)$ is stabilizable and $(A,C)$ is
  detectable. Then there is a unique positive semidefinite solution
  $P\in \mC^{n\times n}$ to the Riccati equation
$$
PBB^*P-PA-A^*P-C^*C=0,
$$
such that $A-BBP^*$ is exponentially stable, that is, $\textrm{\em
  Re}(\lambda)<0$ for all eigenvalues $\lambda$ of $ A-BBP^*$.
\end{proposition}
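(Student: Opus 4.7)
My plan is to prove Proposition~\ref{prop1} via the classical linear–quadratic optimal control interpretation of the Riccati equation. For $x_0 \in \mC^n$, I would introduce the value function
$$
V(x_0) \;=\; \inf_{u} \int_0^\infty \Bigl( \|Cx(t)\|^2 + \|u(t)\|^2 \Bigr)\,dt,
$$
where $x$ solves $\dot x = Ax + Bu$ with $x(0)=x_0$ and $u \in L^2([0,\infty);\mC^m)$. Stabilizability of $(A,B)$ supplies a feedback $F$ for which $A+BF$ is Hurwitz, so that $u(t) = Fe^{(A+BF)t}x_0$ is an admissible control with finite cost; hence $V(x_0) < \infty$ for every $x_0$.

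Next I would show that $V(x_0) = x_0^* P x_0$ for some Hermitian positive semidefinite $P$ and that this $P$ satisfies the Riccati equation. Since the dynamics are linear and the cost is quadratic, each admissible cost functional depends quadratically on $x_0$, and taking the infimum preserves this quadratic structure; non-negativity and Hermitianness of $P$ come directly from $V \geq 0$. A Bellman/dynamic programming argument then identifies $P$ as the matrix whose $x_0^* P x_0 = V(x_0)$, yields the Riccati equation, and shows that the optimal feedback is $u^*(t) = -B^*P x^*(t)$, so that the optimal closed loop is $\dot x^* = (A - BB^*P) x^*$.

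The remaining task is to show that $A - BB^*P$ is Hurwitz and that the stabilizing solution is unique. Stability is where detectability of $(A,C)$ enters: finiteness of the optimal cost forces both $Cx^*$ and $B^*Px^*$ to lie in $L^2$, and writing the optimal trajectory as $\dot x^* = (A+LC)x^* - LCx^* - BB^*P x^*$ for an output injection $L$ with $A+LC$ Hurwitz, the variation of constants formula will yield $x^*(t)\to 0$ for every $x_0$, so $A-BB^*P$ is Hurwitz. For uniqueness, if $P_1$ and $P_2$ are two such solutions then their difference $\Delta = P_1 - P_2$ will satisfy a homogeneous Sylvester–Lyapunov equation
$$
(A-BB^*P_1)^* \Delta + \Delta (A-BB^*P_2) = 0
$$
whose coefficients are both Hurwitz, forcing $\Delta = 0$.

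The main obstacle is the stabilizing step: passing from $L^2$-integrability of the \emph{observable} signals $Cx^*$ and $B^*Px^*$ to actual decay of $x^*$ itself genuinely uses detectability, and is precisely the kind of finite-dimensional argument that one should not expect to go through verbatim when $\mC$ is later replaced by a general commutative Banach algebra $R$ — motivating the counterexample and the substitute hypotheses that form the heart of the paper.
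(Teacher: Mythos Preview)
The paper does not supply a proof of Proposition~\ref{prop1}: it is stated in the introduction as ``a well known result about the existence of solutions,'' and is used thereafter only as background (the paper's substantive work begins with the Banach-algebra analogue and the counterexample). So there is no proof in the paper to compare against.

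On its own merits your outline is the standard LQR/dynamic-programming argument and is essentially correct. Two places deserve a bit more care if you flesh this out. First, the assertion that ``taking the infimum preserves this quadratic structure'' is not literally true for arbitrary families of quadratics; the clean way is to show the optimal control depends linearly on $x_0$ (or to verify homogeneity $V(\lambda x_0)=|\lambda|^2V(x_0)$ together with the parallelogram identity), after which $V(x_0)=x_0^*Px_0$ follows. Second, in the stability step the variation-of-constants formula gives $x^*\in L^2$, not immediately $x^*(t)\to 0$; in finite dimensions this is enough (e.g.\ $x^*\in L^2$ and $\dot x^*=(A-BB^*P)x^*\in L^2$ imply $x^*\to 0$, or invoke Datko's theorem), but you should say so explicitly. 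Your uniqueness argument via the Sylvester equation $(A-BB^*P_1)^*\Delta+\Delta(A-BB^*P_2)=0$ with both coefficient matrices Hurwitz is exactly right.

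Your closing remark about the detectability step being the one that fails over a general commutative Banach algebra $R$ is a nice observation, and indeed anticipates why the paper needs the pointwise hypotheses (A1)--(A5) in Theorem~\ref{main_theorem}.
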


(Recall that the pair $(A,B)$ is {\em stabilizable} if there exists a
$F \in \mC^{m\times n}$ such that $A+BF$ is asymptotically stable, and
the pair $(A,C)$ is {\em detectable} if the pair $(A^*,C^*)$ is
stabilizable.)

There has been old (see \cite{Byr}) and recent (see \cite{Cur})
renewed interest in the following question: if the data $A,B,C$ have
entries in a Banach algebra, then does there exist a solution $P$ also
with entries from the same Banach algebra? In this article, we
investigate this question. We begin by fixing some notation.

\begin{notation}
  Throughout the article, $R$ will denote a commutative, unital,
  complex, semisimple Banach algebra, which possesses an involution
  $\cdot^\star$. 

  On the other hand, the usual adjoint of a matrix $M=[m_{ij}]\in
  \mC^{p\times m}$ will be denoted by $M^* \in \mC^{m\times p}$, that
  is, $ M^* =[\overline{m_{ji}}]$.

$M(R)$ will denote the maximal ideal space of $R$, equipped with the
weak-$\ast$ topology. For $x\in R$, we will denote its Gelfand
transform by $\widehat{x}$, that is, 
$$
\widehat{x}(\varphi)=\varphi(x), \quad \varphi \in M(R),\; x\in R.
$$
For a matrix $M\in R^{p\times m}$, whose entry in the $i$th row and
$j$th column is denoted by $m_{ij}$, we define $M^\star\in R^{m\times
  p}$ to be the matrix whose entry in the $i$th row and $j$th column
is $m_{ji}^\star$. Also by $\widehat{M}$ we mean the $p\times m$
matrix, whose entry in the $i$th row and $j$th column is the
continuous function $\widehat{m_{ij}}$ on $M(R)$. Summarizing, if
$M=[m_{ij}]\in R^{p\times m}$, then
\begin{eqnarray*}
M^\star 
&=&  
\Big[m_{ji}^\star \Big]\in R^{m\times p},
\\
\widehat{M} 
&=& 
\Big[\widehat{m_{ij}}\Big]\in \Big(C(M(R);\mC)\Big)^{p\times m},
\\
\Big(\widehat{M}(\varphi)\Big)^* 
&=&
\Big[\overline{\widehat{m_{ji}}(\varphi)}\Big]\in \mC^{p\times m}.
\end{eqnarray*}
\end{notation}

The following claim was made in \cite[Theorem~2.2, p.248]{Byr}.

\begin{claim}
\label{claim_chris} 
Let $A\in R^{n\times n}$, $B\in R^{n\times m}$, $C\in R^{p\times n}$ 
be such that for all $\varphi\in M(R)$,
$(\widehat{A}(\varphi),\widehat{B}(\varphi))$ and
$(\widehat{A^\star}(\varphi),\widehat{C^\star}(\varphi))$ are
controllable.  Then there exists a solution $P\in R^{n\times n}$ such
that
\begin{align}
\label{riccati}
PBB^\star P-PA-A^\star P-C^\star C=0.
\end{align} 
\end{claim}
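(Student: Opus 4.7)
\medskip

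\noindent\textbf{Proof plan.} The strategy is to lift the classical Hamiltonian/Riesz--projection construction of the stabilising Riccati solution from $\mC$ to the Banach algebra $R$. Form the Hamiltonian matrix
\[
H=\begin{pmatrix} A & -BB^\star \\ -C^\star C & -A^\star \end{pmatrix}\in R^{2n\times 2n}.
\]
For each $\varphi\in M(R)$, granting for the moment that $\widehat H(\varphi)$ has genuine Hamiltonian structure, Proposition~\ref{prop1} applied to $(\widehat A(\varphi),\widehat B(\varphi),\widehat C(\varphi))$ (plus the pointwise controllability hypotheses) forces $\widehat H(\varphi)$ to have exactly $n$ eigenvalues in the open left half-plane and none on the imaginary axis. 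Since $R$ is commutative, $\lambda I-H$ is invertible in $R^{2n\times 2n}$ iff $\det(\lambda I-H)\in R$ is invertible iff its Gelfand transform is nowhere zero on $M(R)$, giving
\[
\sigma_{R^{2n\times 2n}}(H)\;=\;\bigcup_{\varphi\in M(R)}\sigma_{\mC^{2n\times 2n}}\bigl(\widehat H(\varphi)\bigr).
\]
Together with compactness of $M(R)$ this set is bounded away from the imaginary axis, so there is a rectangular contour $\Gamma$ in the open left half-plane that encloses every fibrewise stable eigenvalue and avoids $\sigma(H)$.

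\medskip

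\noindent Next, form the Riesz projection
\[
\Pi\;=\;\frac{1}{2\pi i}\oint_\Gamma(\lambda I-H)^{-1}\,d\lambda\;\in\;R^{2n\times 2n},
\]
which is well-defined because $\lambda I-H$ is invertible in $R^{2n\times 2n}$ for each $\lambda\in\Gamma$. Write $\Pi$ in $n\times n$ blocks as $\Pi=\bigl(\begin{smallmatrix}\Pi_{11}&\Pi_{12}\\\Pi_{21}&\Pi_{22}\end{smallmatrix}\bigr)$. Fibrewise the range of $\widehat\Pi(\varphi)$ is the stable Hamiltonian subspace, which — again in the classical setting — equals the graph of the PSD stabilising solution from Proposition~\ref{prop1}, while the unstable subspace is the graph of the anti-stabilising solution; consequently $\widehat{\Pi_{12}}(\varphi)$ (which expresses the difference of these two graph matrices) is invertible at every $\varphi$. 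The spectrum-as-union fact promotes this to invertibility of $\Pi_{12}$ in $R^{n\times n}$, so $P:=\Pi_{22}\Pi_{12}^{-1}\in R^{n\times n}$ is a well-defined candidate. By construction $\widehat P(\varphi)$ coincides with the fibrewise stabilising Riccati solution, so the Riccati identity holds on every fibre, and semisimplicity of $R$ lifts this to the identity \eqref{riccati} in $R^{n\times n}$.

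\medskip

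\noindent The step I expect to be the main obstacle — and the natural site of a counterexample — is the fibrewise reduction to the classical Riccati equation through the Gelfand transform. Applying a character $\varphi$ to \eqref{riccati} produces
\[
\widehat P(\varphi)\widehat B(\varphi)\widehat{B^\star}(\varphi)\widehat P(\varphi)-\widehat P(\varphi)\widehat A(\varphi)-\widehat{A^\star}(\varphi)\widehat P(\varphi)-\widehat{C^\star}(\varphi)\widehat C(\varphi)=0,
\]
but $\widehat{B^\star}(\varphi)$, $\widehat{A^\star}(\varphi)$, $\widehat{C^\star}(\varphi)$ coincide with the usual matrix adjoints $\widehat B(\varphi)^{*}$, $\widehat A(\varphi)^{*}$, $\widehat C(\varphi)^{*}$ only when the involution on $R$ is \emph{Hermitian}, i.e.\ $\widehat{x^\star}(\varphi)=\overline{\widehat x(\varphi)}$ for every $x\in R$ and $\varphi\in M(R)$ — a compatibility that is \emph{not} in the standing hypotheses. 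Without it, $\widehat H(\varphi)$ loses the Hamiltonian symmetry $J\widehat H=-\widehat H^{*}J$, Proposition~\ref{prop1} does not apply on the fibres, $\widehat H(\varphi)$ may acquire purely imaginary eigenvalues (so the contour $\Gamma$ need not exist), and even when it does not, the stable invariant subspace need not be the graph of any matrix. My plan therefore works only under the extra hypothesis that the involution be Hermitian (automatic for $C^{*}$-algebras), and I would look for a counterexample to Claim~\ref{claim_chris} precisely by engineering an involution on $R$ for which this compatibility fails at some $\varphi$.
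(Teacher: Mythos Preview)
Your diagnosis is correct: Claim~\ref{claim_chris} is false, and the failure occurs exactly where you locate it --- when the involution is not Hermitian, applying $\varphi$ to \eqref{riccati} does not yield a genuine matrix Riccati equation at each fibre, and $\widehat{H}(\varphi)$ loses its Hamiltonian structure. The paper does not attempt to prove the claim; it refutes it with a concrete counterexample: $R=C^1(\mT)$ with the non-Hermitian involution $f^\star(z)=\overline{f(\overline{z})}$, and the scalar data $A(z)=z$, $B=C=1$. The Riccati equation becomes $P^2-2zP-1=0$, forcing $(P-z)^2=z^2+1$; the paper then shows directly that $z^2+1$ has no square root in $C^1(\mT)$ (every candidate root fails to be differentiable at $z=\pm i$). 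So you correctly predicted both the verdict and the mechanism, but you stopped short of actually producing an example --- that is the remaining gap.

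For the positive direction (after adding the Hermitian-type hypotheses (A1)--(A3)), your Riesz-projection route is genuinely different from the paper's. The paper solves the fibrewise Riccati equation to obtain a continuous $\Pi:M(R)\to\mC^{n\times n}$ (continuity via Proposition~\ref{prop2}), and then invokes an implicit-function theorem for Banach algebras (Proposition~\ref{prop3}) to lift $\Pi$ to an element $P\in R^{n\times n}$; the Jacobian nondegeneracy condition there reduces to invertibility of the Sylvester operator $\Theta\mapsto M^*\Theta+\Theta M$ with $M$ the stable closed-loop matrix. Your approach bypasses the implicit-function machinery by building $P$ directly via the holomorphic functional calculus in $R^{2n\times 2n}$, which is more explicit and constructive; the paper's approach, by contrast, is agnostic to the Hamiltonian structure and would adapt verbatim to other polynomial matrix equations whose linearisation is invertible on the joint spectrum.
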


(Recall that for matrices $A\in \mC^{n\times n}$ and $B\in
\mC^{n\times m}$, the pair $(A,B)$ is said to be {\em controllable} if
$\textrm{rank}\left[\begin{array}{ccccc} B & AB & A^2B & \dots &
    A^{n-1}B\end{array}\right]=n$.)

However, in Section~\ref{section_counterexample}, we will see a
counterexample to Claim~\ref{claim_chris}, showing that this is not
true in general, without invoking extra assumptions, and this is our
main result:

\begin{theorem}
\label{main_theorem}
  Let $A\in R^{n\times n}$, $B\in R^{n\times m}$, $C\in R^{p\times n}$
  satisfy the following: for all $\varphi\in M(R)$, 
\begin{itemize}
\item[(A1)] $\widehat{(A^\star)}(\varphi)= (\widehat{A}(\varphi))^*$,
\item[(A2)] $\widehat{(BB^\star)}(\varphi)= 
            \widehat{B}(\varphi) (\widehat{B}(\varphi))^*$,
\item[(A3)] $\widehat{(C^\star C)}(\varphi)= 
            (\widehat{C}(\varphi))^* \widehat{C}(\varphi)$,
\item[(A4)] $(\widehat{A}(\varphi),\widehat{B}(\varphi))$ is
            stabilizable,
\item[(A5)] $(\widehat{A}(\varphi),\widehat{C}(\varphi))$ is
            detectable.
\end{itemize}
Then there exists a $P\in R^{n\times n}$ such that 
\begin{enumerate}
\item $PBB^\star P-PA-A^\star P-C^\star C=0$, 
\item $A-BB^\star P$ is exponentially stable, and
\item for all $\varphi\in M(R)$, $\widehat{P}(\varphi)$ is positive
  semidefinite.
\end{enumerate}
\end{theorem}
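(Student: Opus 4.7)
The plan is to construct $P$ globally in $R^{n\times n}$ by forming the Riesz projection onto the stable invariant subspace of the Hamiltonian matrix and then recovering $P$ through a Gram-matrix construction. First, fix $\varphi\in M(R)$: hypotheses (A1)--(A3) give
\begin{equation*}
\widehat{A^\star}(\varphi)=\widehat{A}(\varphi)^*,\qquad \widehat{BB^\star}(\varphi)=\widehat{B}(\varphi)\widehat{B}(\varphi)^*,\qquad \widehat{C^\star C}(\varphi)=\widehat{C}(\varphi)^*\widehat{C}(\varphi),
\end{equation*}
so (A4)--(A5) let Proposition~\ref{prop1} furnish a unique positive semidefinite matrix $P(\varphi)\in\mC^{n\times n}$ solving the scalar Riccati equation at $\varphi$, with $\widehat{A}(\varphi)-\widehat{B}(\varphi)\widehat{B}(\varphi)^* P(\varphi)$ Hurwitz. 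The entire task reduces to exhibiting $\varphi\mapsto P(\varphi)$ as the Gelfand transform of some $P\in R^{n\times n}$.

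Form the Hamiltonian $H=\begin{pmatrix} A & -BB^\star \\ -C^\star C & -A^\star\end{pmatrix}\in R^{2n\times 2n}$. By standard theory applied pointwise, $\widehat{H}(\varphi)$ has no imaginary eigenvalues and its $n$-dimensional stable invariant subspace is the graph of $P(\varphi)$. Since $M(R)$ is compact and $\varphi\mapsto\widehat{H}(\varphi)$ is continuous, the spectra $\sigma(\widehat{H}(\varphi))$ are uniformly bounded away from the imaginary axis, so one fixes a rectifiable simple closed contour $\Gamma\subset\{\mathrm{Re}\,\lambda<0\}$ enclosing exactly the stable eigenvalues of $\widehat{H}(\varphi)$ for every $\varphi\in M(R)$. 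Because the spectrum of $H$ in $R^{2n\times 2n}$ equals $\bigcup_{\varphi}\sigma(\widehat{H}(\varphi))$, the resolvent $(\lambda I-H)^{-1}$ lies in $R^{2n\times 2n}$ and is holomorphic on a neighborhood of $\Gamma$, so the Riesz projection
\begin{equation*}
\Pi=\frac{1}{2\pi i}\oint_\Gamma (\lambda I-H)^{-1}\,d\lambda\in R^{2n\times 2n}
\end{equation*}
is well defined; $\widehat{\Pi}(\varphi)$ is precisely the spectral projection of $\widehat{H}(\varphi)$ onto the graph of $P(\varphi)$.

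Partition $\Pi$ into four $n\times n$ blocks $\Pi_{ij}$, and set $X=[\Pi_{11}\;\Pi_{12}]$ and $Y=[\Pi_{21}\;\Pi_{22}]$ in $R^{n\times 2n}$. Because each column of $\widehat{\Pi}(\varphi)$ lies in the graph of $P(\varphi)$, one has $\widehat{Y}(\varphi)=P(\varphi)\widehat{X}(\varphi)$, and the surjectivity of the first-coordinate projection on that graph forces $\widehat{X}(\varphi)$ to have full row rank $n$. A short contour-integration calculation, using (A1)--(A3) to identify $\widehat{H^\star}(\varphi)$ with $\widehat{H}(\varphi)^*$ and then conjugating the integrand together with the contour, yields $\widehat{\Pi^\star}(\varphi)=\widehat{\Pi}(\varphi)^*$. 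Consequently $\widehat{XX^\star}(\varphi)=\widehat{X}(\varphi)\widehat{X}(\varphi)^*$ is positive definite for every $\varphi\in M(R)$, so $XX^\star\in R^{n\times n}$ is invertible. Setting
\begin{equation*}
P:=YX^\star(XX^\star)^{-1}\in R^{n\times n},
\end{equation*}
a pointwise evaluation of the Gelfand transform gives $\widehat{P}(\varphi)=P(\varphi)$.

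Semisimplicity of $R$, together with the scalar Riccati identity at each $\varphi$ and (A1)--(A3), then promotes equation (1) from a pointwise identity to an identity in $R^{n\times n}$; conclusion (3) is immediate. For (2), the pointwise Hurwitz-ness of $\widehat{A}(\varphi)-\widehat{B}(\varphi)\widehat{B}(\varphi)^* P(\varphi)$ combined with compactness of $M(R)$ forces $\sigma_{R^{n\times n}}(A-BB^\star P)\subset\{\mathrm{Re}\,\lambda<0\}$. The main obstacle is Step~3: a naive eigenvector-style formula such as $\Pi_{22}\Pi_{12}^{-1}$ or $\Pi_{21}\Pi_{11}^{-1}$ breaks down because neither $\Pi_{11}$ nor $\Pi_{12}$ need be globally invertible (one can manufacture pointwise examples in which one or the other vanishes). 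The Gram-matrix construction finesses this difficulty, and it does so by using the involution $\cdot^\star$ in an essential way -- which is precisely why the Hermitian-compatibility conditions (A1)--(A3), absent from the hypotheses of Claim~\ref{claim_chris}, cannot be dropped.
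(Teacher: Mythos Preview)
Your argument is correct, and it follows a genuinely different route from the paper. The paper first establishes the pointwise solution $\Pi(\varphi)$ and its continuity (via Proposition~\ref{prop2}), and then invokes Hayashi's implicit-function theorem for Banach algebras (Proposition~\ref{prop3}): the $G_k$'s are the $n^2$ entries of the Riccati map $\Theta\mapsto \Theta W\Theta-\Theta U-V\Theta-X$, and the Jacobian with respect to $\Theta$ at the solution is the Sylvester operator $\Theta\mapsto -\big((\widehat A-\widehat B\widehat B^*\Pi)^*\Theta+\Theta(\widehat A-\widehat B\widehat B^*\Pi)\big)$, whose invertibility follows from the closed-loop matrix being Hurwitz. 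That abstract implicit-function step is exactly what you replace by the Riesz projection onto the stable spectral subspace of the Hamiltonian $H\in R^{2n\times 2n}$ and the Gram-matrix formula $P=YX^\star(XX^\star)^{-1}$.

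Each approach has its advantages. The paper's method is shorter once Proposition~\ref{prop3} is taken as a black box, and it transparently extends to other polynomial matrix equations whose linearization at the solution is invertible. Your method is more explicit and self-contained: it avoids both Proposition~\ref{prop2} (continuity comes for free from the Gelfand transform of $\Pi$) and Proposition~\ref{prop3}, relying only on the holomorphic functional calculus in $R^{2n\times 2n}$ and Lemma~\ref{lemma_spectra}. Your construction also makes the necessity of (A1)--(A3) especially vivid: they are exactly the hypotheses that force $\widehat{H^\star}(\varphi)=\widehat{H}(\varphi)^*$, hence $\widehat{\Pi^\star}(\varphi)=\widehat{\Pi}(\varphi)^*$, hence $\widehat{XX^\star}(\varphi)=\widehat{X}(\varphi)\widehat{X}(\varphi)^*$ positive definite, which is what rescues the global invertibility that the naive block formulas $\Pi_{21}\Pi_{11}^{-1}$ lack. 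One small cosmetic point: for conclusion~(2) you implicitly use Lemma~\ref{lemma_spectra} and Proposition~\ref{prop_char_exp_stab} (compactness of the spectrum, rather than of $M(R)$ per se, is what gives the uniform spectral gap); it is worth naming those explicitly.
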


In the following we define what is meant by ``exponentially stable''.

\begin{definition}
Let $R$ be a commutative, unital, complex, semisimple Banach 
algebra. If $A\in R^{n\times n}$, let $M_A:R^n\rightarrow R^n $ be the
multiplication map by the matrix $A$, that is, $v\mapsto Av$ ($v\in
R^n$).  Then $R^{n\times n}$ is a unital complex Banach algebra (for
example) with the norm
$$
\|A\|:=\|M_A\|_{\calL(R^n)}\quad (A\in R^{n\times n})
$$
where $\calL(R^n)$ denotes the set of all continuous linear
transformations from $R^n$ to $R^n$, and $R^n$ is the Banach space
equipped (for example) with the norm
$$
\|x\|=\max\{\|x_k\|:1\leq k\leq n\} \textrm{ for } 
x=\left[\begin{array}{c} x_1\\\vdots\\ x_n\end{array}\right],
$$
and $\calL(R^n)$ is equipped with the usual operator norm:
$$
\|M_A\|_{\calL(R^n)}=\sup\{ \|Av\|:v\in R^n \textrm{ with }\|v\|\leq 1\}.
$$
For $A\in R^{n\times n}$, we define
$$
e^{A}=\sum_{k=0}^\infty \frac{1}{k!}A^k.
$$
The absolute convergence of this series is established just as in the
scalar case.

$A\in R^{n\times n}$ is said to be {\em exponentially stable} if there
exist positive constants $C$ and $\epsilon$ such that 
$$
\|e^{tA}\|\leq Ce^{-\epsilon t} \textrm{ for all } t \geq 0.
$$
\end{definition}

\begin{lemma}
\label{lemma_spectra}
Let $A\in R^{n\times n}$. Then the following are equivalent:
\begin{enumerate}
\item $\lambda$ belongs to the spectrum of $A\in R^{n\times n}$.
\item $\lambda$ belongs to the spectrum of $M_A\in \calL(R^n)$.
\item $\lambda$ belongs to the spectrum of $\widehat{A}(\varphi)$ for some $\varphi \in M(R)$. 
\end{enumerate}
\end{lemma}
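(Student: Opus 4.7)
The plan is to reduce all three conditions to invertibility statements: after replacing $A$ by $A-\lambda I$, the equivalences become (a) $A$ is invertible in $R^{n\times n}$, (b) $M_A$ is invertible in $\calL(R^n)$, and (c) $\widehat{A}(\varphi)$ is invertible in $\mC^{n\times n}$ for every $\varphi\in M(R)$. This reduction is legal because $A\mapsto M_A$ and the Gelfand transform are both unital algebra homomorphisms, so they carry $\lambda I - A$ to $\lambda I - M_A$ and $\lambda I - \widehat{A}(\varphi)$ respectively.

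For (a)$\Leftrightarrow$(c) I would invoke the classical fact from commutative algebra that a square matrix over a commutative unital ring is invertible iff its determinant is a unit, the inverse being furnished by the adjugate formula $A^{-1}=(\det A)^{-1}\operatorname{adj}(A)$. For the commutative unital Banach algebra $R$, an element $r\in R$ is invertible iff $\widehat{r}$ vanishes nowhere on $M(R)$. Since the Gelfand transform is a unital algebra homomorphism and the determinant is a polynomial in the matrix entries, $\widehat{\det A}(\varphi)=\det \widehat{A}(\varphi)$ for every $\varphi$. Chaining these, (a) holds iff $\det A$ is a unit in $R$ iff $\det \widehat{A}(\varphi)\neq 0$ for every $\varphi$ iff (c) holds.

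For (a)$\Leftrightarrow$(b) the implication (a)$\Rightarrow$(b) is immediate from the fact that $A\mapsto M_A$ is a unital algebra homomorphism, since $M_{A^{-1}}$ will then be a two-sided inverse of $M_A$. For the converse, suppose $M_A$ has an inverse $T\in\calL(R^n)$. The key observation is that $M_A$ commutes with every ``scalar'' multiplication operator $M_{rI}$ for $r\in R$, so $T$ does as well. This forces $T(rx)=rT(x)$ for all $r\in R$ and $x\in R^n$, i.e.\ $T$ is $R$-linear. Any $R$-linear endomorphism of the free module $R^n$ has the form $M_C$ for a unique $C\in R^{n\times n}$ (whose $j$th column is $T(e_j)$), so $M_{CA}=M_CM_A=I=M_I$ and $M_{AC}=I=M_I$ give $CA=AC=I$ in $R^{n\times n}$.

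The main obstacle is precisely this last step: upgrading an a priori merely continuous $\mC$-linear inverse $T$ to a matrix multiplication. The commutation-with-$M_{rI}$ trick resolves it cleanly; once $R$-linearity is in hand, the identification $T=M_C$ is purely algebraic and needs no further analytic input.
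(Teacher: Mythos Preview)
Your proposal is correct, and the overall skeleton matches the paper: both reduce the three spectral conditions to invertibility statements and then handle the equivalences (1)$\Leftrightarrow$(3) and (1)$\Leftrightarrow$(2) separately. The details differ, however. For (1)$\Leftrightarrow$(3) the paper simply cites \cite[Theorem~8.1]{GohGolKaa}, whereas you give a self-contained argument via the adjugate formula and the identity $\widehat{\det A}(\varphi)=\det\widehat{A}(\varphi)$; this buys you independence from an external reference at essentially no cost. For (1)$\Leftrightarrow$(2), the paper directly sets the candidate inverse to be the matrix with columns $Te_1,\dots,Te_n$ and asserts that this $M_{A^{-1}}$ agrees with $T$ on all of $R^n$ (a step that implicitly requires $R$-linearity of $T$); you instead first establish $R$-linearity of $T$ via commutation with the central operators $M_{rI}$ and only then identify $T=M_C$. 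Your route makes explicit the point the paper glosses over, at the price of one extra paragraph.
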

\begin{proof}
  The equivalence of (1) and (3) follows from the fact
  \cite[Theorem~8.1, p.830]{GohGolKaa} that $A\in R^{n\times n}$ is
  invertible in the Banach algebra $R^{n\times n}$ if and only if the
  matrix $\widehat{R}(\varphi)$ is invertible in $\mC^{n\times n}$ for
  each $\varphi \in M(R)$.  For $A\in R^{n\times n}$, it can be seen
  that $A$ is invertible in $R^{n\times n}$ if and only if $M_A$ is
  invertible in $\calL(R^n)$. Indeed, the `only if' part is
  trivial, since if $AA^{-1}=I=A^{-1}B$, then $M_AM_{A^{-1}}=
  I=M_{A^{-1}}M_A$. Vice versa, if $M_A T=I=TM_A$ for some $T\in
  \calL(R^n)$, then set
$$
A^{-1}_:= \left[\begin{array}{ccc} Te_1 & \dots & Te_n
  \end{array}\right]\in R^{n\times n}, 
$$
where $e_k$ ($k=1,\dots, n$) denotes the vector in $R^n$ with $1$ in
the $k$th position and zeros elsewhere. Then $M_{A^{-1}}e_k=Te_k$ for
all $k=1,\dots, n$, and so $M_{A^{-1}}v=Tv$ for all $v\in R^n$. Hence
(1) and (2) are equivalent.
\end{proof}

The following gives a characterization of exponential stability.

\begin{proposition}
\label{prop_char_exp_stab}
Let $A\in R^{n\times n}$. Then $A$ is exponentially stable if and only if
$$
\sup \{ \textrm{\em Re}(\lambda):\lambda \textrm{ is an eigenvalue of }
\widehat{A}(\varphi) \textrm{ for some } \varphi \in M(R)\} <0.
$$
\end{proposition}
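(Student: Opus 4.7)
The plan is to reduce exponential stability of $A$ to a spectral radius condition on $e^A$ in the Banach algebra $R^{n\times n}$, and then to relate the spectrum of $e^A$ to the eigenvalues of $\widehat{A}(\varphi)$ by combining Lemma~\ref{lemma_spectra} with a spectral mapping identity obtained via the Gelfand transform.

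First I would establish the spectral mapping identity: for every $t\geq 0$, the spectrum of $e^{tA}\in R^{n\times n}$ equals $\{e^{t\lambda} : \lambda \textrm{ lies in the spectrum of } A\}$. The key observation is that since the Gelfand transform is a homomorphism on the commutative algebra $R$ and each entry of $A^k$ is a fixed polynomial in the entries of $A$, one has $\widehat{A^k}(\varphi) = (\widehat{A}(\varphi))^k$ entry by entry; the exponential series converges absolutely in $R^{n\times n}$ and applying $\varphi$ entrywise is continuous, so $\widehat{e^{tA}}(\varphi) = e^{t\widehat{A}(\varphi)}$. Invoking Lemma~\ref{lemma_spectra} for both $A$ and $e^{tA}$, together with the classical spectral mapping theorem for matrices in $\mC^{n\times n}$, yields the desired identity.

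Write $s := \sup\{\textrm{Re}(\lambda) : \lambda \textrm{ is an eigenvalue of } \widehat{A}(\varphi) \textrm{ for some } \varphi\in M(R)\}$, so that the spectral radius of $e^{tA}$ in $R^{n\times n}$ equals $e^{ts}$. For the forward implication, suppose $\|e^{tA}\|\leq Ce^{-\epsilon t}$ for all $t\geq 0$. Since the spectral radius is at most the norm, $e^{ts}\leq Ce^{-\epsilon t}$, whence $s\leq -\epsilon + t^{-1}\log C$; letting $t\to\infty$ gives $s\leq -\epsilon<0$.

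For the converse, suppose $s<0$. Then the spectral radius of $e^A$ equals $e^s<1$, so the Gelfand spectral radius formula in the Banach algebra $R^{n\times n}$ gives $\|e^{kA}\|^{1/k}\to e^s$; hence there exist $\rho \in (e^s,1)$ and a positive integer $k_0$ with $\|e^{kA}\|\leq \rho^k$ for all integers $k\geq k_0$. Let $M := \sup_{0\leq u\leq 1}\|e^{uA}\|$, which is finite by continuity of $u\mapsto e^{uA}$. Writing an arbitrary $t\geq 0$ as $t=k+u$ with $k\in\mN$ and $u\in[0,1)$, one estimates $\|e^{tA}\|\leq \|e^{kA}\|\,\|e^{uA}\|\leq M\rho^k\leq (M/\rho)\rho^t$, which gives exponential decay with rate $-\log\rho>0$. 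The main obstacle is the spectral mapping identity itself in this Banach algebra setting; the cleanest route is the Gelfand detour sketched above, which reduces it at each maximal ideal to the finite-dimensional spectral mapping theorem. Everything else is standard Gelfand spectral radius calculus applied in $R^{n\times n}$.
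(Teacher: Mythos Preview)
Your argument is correct, and it takes a genuinely different route from the paper. The paper's proof is a two-line reduction: it quotes the semigroup-theoretic fact (from Engel--Nagel) that for a bounded operator $M_A\in\calL(R^n)$ the uniformly continuous semigroup $(e^{tM_A})_{t\geq 0}$ is exponentially stable if and only if the spectral bound of $M_A$ is negative, and then invokes Lemma~\ref{lemma_spectra} to identify that spectral bound with the supremum over eigenvalues of $\widehat{A}(\varphi)$. You instead reprove that black-box fact directly in $R^{n\times n}$: you establish the spectral mapping $\sigma_{R^{n\times n}}(e^{tA})=\{e^{t\lambda}:\lambda\in\sigma_{R^{n\times n}}(A)\}$ via the Gelfand detour and Lemma~\ref{lemma_spectra}, read off the spectral radius as $e^{ts}$, and then run the spectral radius formula in the (noncommutative) Banach algebra $R^{n\times n}$. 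The payoff of your approach is that it is self-contained and avoids citing semigroup literature; the paper's route is shorter but leans on an external result. One minor sloppiness: you obtain $\|e^{kA}\|\leq\rho^k$ only for $k\geq k_0$ but then apply it for all $k\in\mN$ in the final estimate; this is harmless since the finitely many integers $0\leq k<k_0$ are absorbed into the constant, but you should say so.
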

\begin{proof} We recall the result in semigroup theory that the
  semigroup generated by a continuous linear transformation on a
  Banach space is exponentially stable if and only if the supremum of
  the real parts of points in the spectrum of the operator is strictly
  negative \cite[Corollary~IV.2.4, p.252 and Proposition~V.1.7,
  p.299]{EngNag}. Using this, we see that $A$ is exponentially stable
  if and only if $ \sup \{ \textrm{Re}(\lambda):\lambda \textrm{
    belongs to the spectrum of }M_A \in \calL(R^n)\}$ is negative.
  The proof is now finished by using Lemma~\ref{lemma_spectra}.
\end{proof}

The proof in Section \ref{Proof} of our main result above is similar
to the approach in \cite{Byr}, where we first take Gelfand transform
of our equation, and show that the pointwise solution is continuous.
Then we use the Banach algebra operational calculus to ensure that
this continuous solution is actually the Gelfand transform of a matrix
with entries from the Banach algebra.

In Section~\ref{Applications} we discuss the applications of this
result to the control of spatially invariant systems.

% For simplicity we have considered the standard Riccati equation
% \eqref{riccati}, but it should be clear that similar results can be
% obtained for more general Riccati equations, for example,
% $$A^\star P+PA-(PB+C^\star D)(I+D^\star D)^{-1}(B^\star P+D^\star C)
% +C^\star C
% =0,$$
% and $H^\infty$ type Riccati equations.

\section{Counterexample to Claim~\ref{claim_chris}}
\label{section_counterexample}

\begin{example}
\label{example_counterexample}
Consider the Banach algebra $C^{1}(\mT)$ of all continuously
differentiable functions on the unit circle with pointwise operations
and the norm
$$
\|f\|_{C^{1}(\mT)}=\|f\|_\infty+\|f'\|_\infty,
$$
with the understanding that $f'(e^{i\theta}):=
\frac{dF}{d\theta}(\theta)$, where $F(\theta):=f(e^{i\theta})$
($\theta\in \mR$). Then $C^1(\mT)$ is a semisimple commutative unital
complex Banach algebra. Every point $z$ on $\mT$ gives rise to the
complex homomorphism
$$
f\mapsto f(z): C^{1}(\mT)\rightarrow \mC.
$$
Also, all complex homomorphisms arise in this manner, and this can be
seen as follows. If $\varphi$ is a complex homomorphism which is not a
point evaluation at any point of $\mT$, then given any $z\in \mT$,
there is a corresponding $f\in C^{1}(\mT)$ such that $\varphi(f)=0$,
but $f(z)\neq 0$. So in fact for $w$'s belonging to a small
neighbourhood of this $z$, we have $|f(w)|\geq \delta_z>0$. But $\mT$
is compact, and so there exist finitely many functions $f_1,\dots,
f_n$ such that $\varphi(f_i)=0$ for each $i$, and for every point on
the unit circle, at least one of the functions $f_i$ ($1\leq i\leq n$)
is nonzero there. Thus the function
$$
g:=f_1 \overline{f_1}+\dots +f_n \overline{f_n}
$$
is in $C^1(\mT)$, it satisfies $\varphi(f)=0$, and is nonzero on
$\mT$. But being nonzero on $\mT$, $g$ is invertible as an element of
$C^1(\mT)$, a contradiction to the fact that a maximal ideal cannot
contain units.  Hence the maximal ideal space of $C^1(\mT)$ can be
identified with the unit circle $\mT$.

The Banach algebra $C^1(\mT)$ possesses the involution $\cdot^\star$
defined by
$$
f^\star(z)= \overline{f(\overline{z})} \;\; (z\in \mT), 
\textrm{ for }  f\in C^1(\mT). 
$$
Now consider the following $A,B,C\in C^1(\mT)$:
$$
A(z)=z, \quad B(z)=1, \quad C(z)=1 , \quad (z\in \mT).
$$
Then we have that $A=A^\star$, $B=B^\star=C=C^\star$.

For each $z\in \mT$, $(\widehat{A}(\varphi),\widehat{B}(\varphi))
=(z,1)=(\widehat{A^\star}(\varphi),\widehat{C^\star}(\varphi))$ is
controllable. Thus all the hypotheses of the Claim~\ref{claim_chris}
are satisfied. But we will show below that the corresponding Riccati
equation has no solution in the Banach algebra $C^1(\mT)$.

The Riccati equation is $ P^2-2zP-1=0$. Let us suppose that this has a
solution $P\in C^1(\mT)$. Then we obtain
$$
(\widehat{P}(z))^2-2z \widehat{P}(z)-1=0\quad (z\in \mT),
$$
that is $(\widehat{P}(z)-z)^2=z^2+1$ ($z\in \mT$).  We will now show
the following:

\medskip 

\noindent {\bf Claim:} There is no $Q\in C^1(\mT)$ such that 
$(\widehat{Q}(z))^2=z^2+1$ ($z\in \mT$).

\medskip 

\noindent It is not hard to see that the function $g$ given by 
$$
g(e^{i\theta}):= \left\{\begin{array}{ll}
    \sqrt{2} \sqrt{\cos \theta}\;\!e^{i\frac{\theta}{2}} 
    & \textrm{if } \cos \theta\geq 0,\\
    \sqrt{2} \sqrt{-\cos \theta}\;\!e^{i\frac{(\theta+\pi)}{2}}&
    \textrm{if } \cos \theta<0\end{array}\right.
$$ 
does satisfy $(g(z))^2=z^2+1$ ($z\in \mT$). As the function $g$ is not
differentiable when $\theta=\frac{\pi}{2}$, it follows that $g\not\in
C^1(\mT)$.  Since $g$ has two roots on $\mT$, namely at $i$ and at
$-i$, it follows from $Q^2=g^2$, that $Q$ is either $g$ or $-g$ or
$hg$ or $-hg$, where
$$
h(z):=\left\{\begin{array}{ll}
    1 & \textrm{if Re}(z)\geq 0,\\
    -1 & \textrm{if Re}(z)<0.
\end{array}\right. 
$$
But none of these functions is differentiable when
$\theta=\frac{\pi}{2}$.  This completes the proof of the fact that
there is no $Q\in C^1(\mT)$ such that $ (\widehat{Q}(z))^2=z^2+1$
($z\in \mT$).

So we conclude that the Claim~\ref{claim_chris} is false.
\hfill$\Diamond$
\end{example}

\section{Proof of the main result}\label{Proof}

We will need the following two results. The first one says that if we
consider the classical Riccati equation with constant complex
matricial data $A,B,C$, then the solution $P$ depends continuously on
the $A,B,C$; see \cite[Theorem~1.2.1, p.260]{LanRod}.

\begin{proposition}
\label{prop2}
  With the same notation as in Proposition~\ref{prop1}, the
  maximal Hermitian solution $P(A,B,C)$ of the Riccati equation is a
  continuous function of $(A,B,C)$. (Here $P(A,B,C)$ is viewed as a
  function on a subset of $\mC^{n^2}\times \mC^{nm}\times \mC^{pn}$
  with the usual topology).
\end{proposition}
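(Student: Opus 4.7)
The plan is to prove Proposition~\ref{prop2} locally via the implicit function theorem applied to the smooth map
$$F : \mC^{n\times n}\times \mC^{n\times m}\times \mC^{p\times n}\times \mC^{n\times n}\to \mC^{n\times n}, \qquad F(A,B,C,P):=PBB^*P-PA-A^*P-C^*C.$$
I fix a base point $(A_0,B_0,C_0)$ satisfying the hypotheses of Proposition~\ref{prop1} and set $P_0:=P(A_0,B_0,C_0)$. A direct calculation of the partial Fr\'echet derivative with respect to $P$ at $(A_0,B_0,C_0,P_0)$ gives
$$D_P F(A_0,B_0,C_0,P_0)[X] = -XA_c-A_c^*X, \qquad A_c:=A_0-B_0B_0^*P_0,$$
since $P_0=P_0^*$ makes $(BB^*P_0)^*=P_0 BB^*$.

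Because $P_0$ is the stabilizing solution, $A_c$ is exponentially stable, so the spectra of $A_c$ and $-A_c^*$ are disjoint and the Lyapunov operator above is a bijection of $\mC^{n\times n}$. The implicit function theorem then produces a continuous (in fact analytic) map $\widetilde{P}$ on some neighborhood $U$ of $(A_0,B_0,C_0)$ with $\widetilde{P}(A_0,B_0,C_0)=P_0$ and $F(A,B,C,\widetilde{P}(A,B,C))=0$ throughout $U$. To finish I would identify $\widetilde{P}$ with $P$: stabilizability and detectability are open conditions on $(A,B,C)$, so $P(A,B,C)$ is well-defined on a neighborhood of the base point; and because the eigenvalues of $A-BB^*\widetilde{P}(A,B,C)$ depend continuously on $(A,B,C)$ and coincide with those of $A_c$ at the base point, after shrinking $U$ the closed-loop matrix remains exponentially stable. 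The uniqueness of the stabilizing solution in Proposition~\ref{prop1} then forces $\widetilde{P}=P$ on $U$, and continuity of $P$ at $(A_0,B_0,C_0)$ follows. (Hermiticity of $\widetilde{P}$ at nearby points is obtained by applying the local uniqueness of the implicit function theorem simultaneously to $\widetilde{P}$ and $\widetilde{P}^*$.)

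The main obstacle I anticipate is precisely this last step: passing from the local solution delivered by the implicit function theorem to the genuine maximal (stabilizing) Hermitian solution $P(A,B,C)$, as opposed to some other branch of solutions to the Riccati equation. The argument above relying on openness of exponential stability together with the uniqueness clause of Proposition~\ref{prop1} resolves it. An alternative and perhaps more geometric route would proceed via the Hamiltonian matrix
$$H(A,B,C)=\begin{bmatrix} A & -BB^* \\ -C^*C & -A^* \end{bmatrix},$$
which, under the given hypotheses, has no eigenvalues on the imaginary axis and whose $n$-dimensional stable invariant subspace is the graph of $P(A,B,C)$. The Riesz spectral projection onto that subspace depends holomorphically on $H$, hence continuously on $(A,B,C)$, and continuity of $P$ is immediate.
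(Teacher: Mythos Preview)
The paper does not give a proof of Proposition~\ref{prop2}; it simply cites \cite[Theorem~1.2.1, p.260]{LanRod}. Your argument via the implicit function theorem is correct and is in fact one of the standard proofs of this result: the Fr\'echet derivative $X\mapsto -A_c^*X-XA_c$ is invertible precisely because $A_c$ is Hurwitz, and your identification of the implicitly defined branch $\widetilde P$ with the stabilizing solution---using openness of stabilizability/detectability, continuity of eigenvalues, and the uniqueness clause of Proposition~\ref{prop1}---closes the loop cleanly. One small caveat: because $F$ involves the adjoints $A^*$, $B^*$, $C^*$, it is real-analytic rather than holomorphic in $(A,B,C)$, so your parenthetical ``analytic'' should be read in the real sense; this does not affect the argument.

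Your alternative route via the Hamiltonian matrix $H(A,B,C)$ and the Riesz projection onto its stable invariant subspace is also correct and is closer to the approach actually taken in \cite{LanRod}. That method has the advantage of giving a global formula for $P(A,B,C)$ (as the graph map of a continuously varying subspace), whereas the implicit function theorem is inherently local---though of course local continuity at every point is all that is claimed. Either approach is more than adequate here, since the paper only needs the continuity statement as a black box to feed into Proposition~\ref{prop3}.
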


The next result we will need is the following (see \cite[p.155]{Hay}),
and this will be used to pass from continuous functions on $M(R)$ to
elements of $R$.

\begin{proposition}
\label{prop3}
Let $h_1,\dots, h_s$ be continuous functions on $M(R)$. Suppose that
  %there are elements 
$f_1,\dots f_\ell$ in $R$ and $
G_1(z_1,\dots, z_{s+\ell}),\dots, G_t(z_1,\dots,z_{s+\ell})
$ are holomorphic functions 
with $t\geq s$ defined on a neighbourhood of the joint spectrum
$$
\sigma(h_1,\dots, h_s, f_1,\dots, f_\ell)
:=
\{ (h_1(\varphi),\dots,h_s(\varphi), \widehat{f_1}(\varphi),\dots,
\widehat{f_\ell}(\varphi)):\varphi\in M(R)\},
$$
such that 
\begin{equation}
\label{eq_hayashi_cont_soln}
G_k(h_1,\dots, h_s, \widehat{f_1},\dots,\widehat{f_\ell})=0 
\textrm{ on }M(R) \textrm{ for }1\leq k\leq t.
\end{equation}
If the rank of the Jacobi matrix $\displaystyle
\frac{\partial(G_1,\dots, G_t)}{\partial(z_1,\dots, z_s)}$ is $s$ on
$\sigma(h_1,\dots, h_s, f_1,\dots, f_\ell)$, then there exist elements
$g_1,\dots, g_s$ in $R$ such that
$$
\widehat{g_1}=h_1,\dots,\widehat{g_s}=h_s .
$$
\end{proposition}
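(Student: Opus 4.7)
The plan is to combine the holomorphic implicit function theorem with the Arens--Calder\'on holomorphic functional calculus for several commuting elements of a commutative Banach algebra. The strategy is to use the Jacobian rank hypothesis to express each $h_j$ locally as a holomorphic function of the $\widehat{f_i}$'s, and then to synthesise these local representations into genuine elements of $R$ whose Gelfand transforms recover the $h_j$.

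First I would observe that the joint spectrum $\sigma:=\sigma(h_1,\dots,h_s,f_1,\dots,f_\ell)$ is a compact subset of $\mC^{s+\ell}$, being the continuous image of the compact Hausdorff space $M(R)$. Fix $\varphi_0\in M(R)$ and set $p_0:=(h_1(\varphi_0),\dots,h_s(\varphi_0),\widehat{f_1}(\varphi_0),\dots,\widehat{f_\ell}(\varphi_0))\in\sigma$. By hypothesis, there exist indices $k_1<\dots<k_s$ (depending on $p_0$) for which $\partial(G_{k_1},\dots,G_{k_s})/\partial(z_1,\dots,z_s)$ is non-singular at $p_0$. The holomorphic implicit function theorem then furnishes a polydisc $U_0\subset\mC^\ell$ around $(\widehat{f_1}(\varphi_0),\dots,\widehat{f_\ell}(\varphi_0))$ together with holomorphic functions $\psi_{1,0},\dots,\psi_{s,0}\colon U_0\to\mC$ such that, inside a neighbourhood of $p_0$, the system $G_{k_1}=\dots=G_{k_s}=0$ is equivalent to $z_j=\psi_{j,0}(z_{s+1},\dots,z_{s+\ell})$ for $j=1,\dots,s$. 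Continuity of $\widehat{f_i}$ and $h_j$ then yields an open set $W_0\subset M(R)$ containing $\varphi_0$ on which $h_j(\varphi)=\psi_{j,0}(\widehat{f_1}(\varphi),\dots,\widehat{f_\ell}(\varphi))$.

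Using the compactness of $M(R)$, I would extract a finite subcover $W_1,\dots,W_N$ with associated local holomorphic branches $\psi_{j,i}$ defined on polydiscs $U_i\subset\mC^\ell$. On any overlap $W_i\cap W_{i'}$ the two branches return the common value $h_j(\varphi)$; however, they may represent genuinely distinct holomorphic germs on the ambient set $U_i\cap U_{i'}$ if the projection $\sigma\to\sigma(f_1,\dots,f_\ell)$ is not injective, i.e.\ if two points of $M(R)$ agree on all $\widehat{f_i}$'s but disagree on some $h_j$. This is the main obstacle: the $\psi_{j,i}$ need not amalgamate into a single-valued holomorphic function on an open neighbourhood of $\sigma(f_1,\dots,f_\ell)\subset\mC^\ell$, and so the classical Arens--Calder\'on calculus in the $f_i$ alone does not immediately apply.

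To resolve this, I would appeal to the enlargement form of the Arens--Calder\'on lemma: one adjoins finitely many auxiliary elements $f_{\ell+1},\dots,f_{\ell+r}\in R$ so that the enlarged joint spectrum $\sigma(f_1,\dots,f_{\ell+r})$ is polynomially convex and lies inside an open set on which the local branches do piece together into a single-valued holomorphic function $\Psi_j$. The holomorphic functional calculus then produces $g_j:=\Psi_j(f_1,\dots,f_{\ell+r})\in R$, and tracking Gelfand transforms through the calculus yields $\widehat{g_j}=h_j$ on $M(R)$, as required. The technical heart of the argument is precisely the construction of the auxiliary elements so that the local branches can be globally amalgamated; the details are carried out in \cite[p.155]{Hay}.
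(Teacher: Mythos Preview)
The paper does not prove this proposition at all: it is quoted verbatim as a known result from Hayashi \cite[p.155]{Hay} and used as a black box in the proof of Theorem~\ref{main_theorem}. So there is no ``paper's own proof'' to compare against; your sketch is an attempt to outline Hayashi's argument itself, and indeed you defer to \cite{Hay} at the end for the technical core.

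As a sketch of Hayashi's proof, your outline captures the right architecture: local application of the holomorphic implicit function theorem to get branches $\psi_{j,i}$, compactness to reduce to finitely many patches, identification of the gluing obstruction when $\sigma\to\sigma(f_1,\dots,f_\ell)$ is not injective, and the Arens--Calder\'on enlargement trick to resolve it. The one place where your description is a bit loose is the role of the auxiliary elements $f_{\ell+1},\dots,f_{\ell+r}$: Arens--Calder\'on in its standard form guarantees that the polynomial hull of the enlarged joint spectrum can be pushed inside a prescribed open set, but it does not by itself say anything about making multi-valued branches become single-valued. What one actually needs is to adjoin elements that \emph{separate} the fibres of the projection near points where distinct branches coexist, so that on the enlarged spectrum the graph of $(h_1,\dots,h_s)$ becomes locally a single-valued holomorphic section; this separation step, together with a Cousin-type patching argument, is precisely the content of Hayashi's paper. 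Your sentence ``so that the local branches do piece together'' hides this nontrivial construction. Since you explicitly cite \cite{Hay} for these details, the proposal is acceptable as a proof sketch, but it would be misleading to present it as a self-contained proof.
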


We are now ready to prove our main result.

\begin{proof}[Proof of Theorem~\ref{main_theorem}] If we fix a
  $\varphi\in M(R)$, then owing to the assumptions (A4) and (A5), we
  know that there is a unique solution in $\mC^{n\times n }$, which we
  will denote by $\Pi(\varphi)$, such that it is positive
  semidefinite, 
\begin{equation}
\label{riccati_1}
\Pi(\varphi)\widehat{B}(\varphi)(\widehat{B}(\varphi))^*\Pi(\varphi)
-\Pi(\varphi)\widehat{A}(\varphi)-(\widehat{A}(\varphi))^*\Pi(\varphi)
-(\widehat{C}(\varphi))^*\widehat{C}(\varphi)=0,
\end{equation}
and $\widehat{A}(\varphi)-\widehat{B}(\varphi)(\widehat{B}(\varphi))^*
\Pi(\varphi)$ is exponentially stable.

Moreover, from Proposition~\ref{prop2}, it follows that the map
$\varphi\mapsto \Pi(\varphi)$ is continuous on $M(R)$.

Finally we will apply Proposition~\ref{prop3}. We have in our case
$s=n^2$, $t=n^2$, the $h_i$'s are the components of $\Pi$ and the
$f_i$'s are the components of $A,A^\star,BB^\star,C^\star C$ (which
are totally $\ell=n^2+n^2+n^2+n^2=4n^2$ in number).  The maps
$G_1,\dots G_{t=n^2}$ are the $n^2$ components of the map
$$
(\Theta, U,V,W,X)\mapsto \Theta W \Theta-\Theta U-V\Theta-X.
$$
(In the above we have the replacements of $A, A^*,BB^*,CC^*$ by the
complex variables which are the components of $U,V,W,X$, respectively.
The replacements of the $P$ in the Riccati equation is by the complex
variables which are the components of $\Theta$.)  Clearly the above
map is holomorphic not just on the joint spectrum, but rather in the
whole of $\mC^{s+\ell}=\mC^{n^2+ 4n^2}$. 

In light of the assumptions (A1)-(A3) and and \eqref{riccati_1}, we
know that there is a continuous solution $\Pi$ on the maximal ideal
space such that for all $k$,
\begin{equation}
\label{eq_riccati_2}
G_k(\Pi, \widehat{A},\widehat{A^\star}, \widehat{BB^\star},\widehat{C^\star C})=0 
\end{equation}
on $M(R)$ (that is, condition \eqref{eq_hayashi_cont_soln} in
Proposition~\ref{prop3} is satisfied).

So we now investigate the Jacobian with respect to the variables in
$\Theta$. The Jacobian with respect to the $\Theta$ variables at the
point
$$
\!\!\!\Big(\!\Pi(\varphi), \;\;\widehat{A}(\varphi), \;\;
(\widehat{A}(\varphi))^*,\;\;\widehat{BB^\star}(\varphi)\!
=\!\widehat{B}(\varphi)(\widehat{B}(\varphi))^*,\;\;
\widehat{C^\star C}(\varphi)\!\!=
(\widehat{C}(\varphi))^*\widehat{C}(\varphi)
\!\Big)
$$ 
is the following linear transformation $\Lambda$ from
$\mC^{n^2}\rightarrow \mC^{n^2}$:
$$
\Theta \mapsto \Theta
\widehat{B}(\varphi)(\widehat{B}(\varphi))^*\Pi(\varphi)+ 
\Pi(\varphi)
\widehat{B}(\varphi)(\widehat{B}(\varphi))^*\Theta
-\Theta\widehat{A}(\varphi)-(\widehat{A}(\varphi))^*\Theta,
$$
that is, 
$$
\!\Theta \mapsto -\Bigg(
\!\!\Big(\widehat{A}(\varphi)-
\!\widehat{B}(\varphi)(\widehat{B}(\varphi))^*\Pi(\varphi)\Big)^*\Theta
+\Theta
\Big(\widehat{A}(\varphi)-
\!\widehat{B}(\varphi)(\widehat{B}(\varphi))^*\Pi(\varphi)\!\Big)\!\!
\Bigg)
$$
The set of eigenvalues of $\Lambda$ consists of the numbers 
$$
-(\overline{\lambda}+\mu),
$$
where $\lambda$, $\mu$ belong to the set of eigenvalues of
$\widehat{A}(\varphi)-\widehat{B}(\varphi)(\widehat{B}(\varphi))^*
\Pi(\varphi)$; see for example \cite[Proposition~7.2.3]{Ber}.  But
since $\widehat{A}(\varphi)-\widehat{B}(\varphi)
(\widehat{B}(\varphi))^* \Pi(\varphi)$ is exponentially stable, all
its eigenvalues have a negative real part.  Hence
$-(\overline{\lambda}+\mu)\neq 0$ for all $\lambda$, $\mu$ belonging
to the set of eigenvalues of the matrix 
$\widehat{A}(\varphi)-\widehat{B}(\varphi)(\widehat{B}(\varphi))^*
\Pi(\varphi)$. Consequently, the map $\Lambda$ is invertible from
$\mC^{n^2}$ to $\mC^{n^2}$, and its rank is $n^2=s$. So by
Proposition~\ref{prop3}, there exists a $P\in R^{n\times n}$ such that
$\widehat{P}(\varphi)=\Pi(\varphi)$ for all $\varphi \in M(R)$. From
\eqref{eq_riccati_2}, it follows (using the fact that $R$ is
semisimple) that
$$
PBB^\star P-PA-A^\star P-C^\star C=0.
$$
From the property possessed by the pointwise solutions $\Pi(\varphi)$
($\varphi \in M(R)$) of the constant complex matricial Riccati
equations \eqref{riccati_1}, we have that for all $\varphi\in M(R)$,
all eigenvalues of $\widehat{(A-BB^\star P)}(\varphi)$ have a negative
real part. But the set-valued map taking a square complex matrix of
size $n\times n$ to its spectrum (a set of $n$ complex numbers) is a
continuous map; see for example \cite[II,\S 5, Theorem~5.14,
p.118]{Kat}. Since $M(R)$ is compact in the Gelfand topology (the
weak-$\ast$ topology induced on $M(R)$ considered as a subset of
$\calL(R;\mC)$), it follows that
$$
\sup \{ \textrm{Re}(\lambda):\lambda \textrm{ is an eigenvalue of }
\widehat{(A-BB^\star P)}(\varphi) \textrm{ for some } \varphi \in M(R)\} <0.
$$
From Proposition~\ref{prop_char_exp_stab}, it follows that $A-BB^\star
P$ is exponentially stable.  Finally, again by the property possessed
by the pointwise solution $\Pi$, we have that for all $\varphi\in
M(R)$, $\widehat{P}(\varphi)$ is positive semidefinite.  This
completes the proof of Theorem~\ref{main_theorem}.
\end{proof}

\begin{example} [Example~\ref{example_counterexample} revisited]  
Let us check what went wrong with our example considered earlier. 
Recall that the Banach algebra was $C^1(\mT)$, the involution
$\cdot^\star$ was given by
$$
f^\star(z)=\overline{f(\overline{z})}\quad (z\in \mT),
$$
and the Riccati equation data was given by $A=z$, $B=C=1$. We see that
in Theorem~\ref{main_theorem}, for this particular example, although
the assumptions (A2)-(A5) are satisfied, (A1) fails to hold. Indeed,
$$
A^\star(z)=z \quad (z\in \mT)
$$
and so for $z\in \mT\setminus\{-1,1\}$, we have 
$$
\widehat{(A^\star)}(z)=z\neq \overline{z}= (\widehat{A}(z))^*.
$$
So it is no surprise that a solution does not exist to the Riccati
equation.

If instead, we consider the following new $A$, given by 
$$
A(z)=z+\frac{1}{z} \quad (z\in \mT),
$$
then 
$$
\widehat{(A^\star)}(z)=z+\frac{1}{z}=\overline{z+\frac{1}{z}}
=(\widehat{A}(z))^*\quad(z\in
\mT).
$$
With the same $B=C=1$ considered earlier, we see that the assumptions
(A1)-(A5) in Theorem~\ref{main_theorem} are all satisfied now, and the
Riccati equation
$$
PBB^\star P-PA-A^\star P-C^\star C=P^2 -2\Big(z+\frac{1}{z}\Big)P-1=0
$$
has a solution $P\in C^1(\mT)$, given by:
$$
P(e^{i\theta})=2\cos \theta+\sqrt{(2\cos \theta)^2+1} \quad (\theta\in \mR).
$$
Clearly, $P(e^{i\theta})$ is positive semidefinite (as it is $\geq
0$). Moreover, 
$$
\widehat{(A-BB^\star P)}(e^{i\theta})=-\sqrt{(2\cos \theta)^2+1}\leq
-1,
$$
and so $A-BB^\star P$ is exponentially stable by
Proposition~\ref{prop_char_exp_stab}.  \hfill$\Diamond$
\end{example}

We observe that whether or not assumptions (A1)-(A3) in
Theorem~\ref{main_theorem} hold is intimately related to the choice of
the involution $\cdot^\star$ in the Banach algebra $R$. For some
commutative Banach algebras with involutions, this is automatic,
namely if it is {\em symmetric}. 

\begin{definition}
  A unital Banach algebra $R$ with an involution $\cdot^\star$ is said
  to be {\em symmetric} if for every $x\in R$, the spectrum of
  $xx^\star$ (as an element of $R$) is contained in $[0,+\infty)$.
  Equivalently, $R$ is symmetric if and only if for every $x\in R$
  satisfying $x=x^\star$ implies that the spectrum of $x$ is real; see
  \cite[\S2.3, p.2700]{GroLei}.  The involution is then called a {\em
    symmetric involution}.
\end{definition}

In the case when $R$ is commutative, this is equivalent to the
following (see \cite[Definition~2, \S I.8, p.57]{GelRaiShi}).
  
\begin{proposition}
Let $R$ be a commutative unital complex semisimple Banach algebra with an 
involution $\cdot^\star$. Then  the following are equivalent:
\begin{enumerate}
\item $R$ is symmetric.
\item For each $x\in R$, $ \varphi(x^\star)=\overline{\varphi(x)}$
  ($\varphi\in M(R)$).
\item For each $x\in R$, $1+x^\star x$ is invertible in $R$.
\end{enumerate}
\end{proposition}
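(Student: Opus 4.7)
The plan is to establish $(1) \Leftrightarrow (2)$ via the Gelfand representation, observe that $(2) \Rightarrow (3)$ is immediate, and handle the substantive direction $(3) \Rightarrow (2)$ by a rescaling argument; these together give the three-way equivalence. For $(1) \Rightarrow (2)$, I would decompose $x = h + ik$ with $h = (x + x^\star)/2$ and $k = (x - x^\star)/(2i)$; a direct computation using conjugate linearity of $\cdot^\star$ gives $h^\star = h$ and $k^\star = k$. Since $R$ is symmetric, self-adjoint elements have real spectrum, so $\varphi(h), \varphi(k) \in \mR$ for every $\varphi \in M(R)$ (via the standard Gelfand identity $\sigma(a) = \widehat{a}(M(R))$). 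Then $\varphi(x^\star) = \varphi(h) - i\varphi(k) = \overline{\varphi(x)}$. Conversely, if $h = h^\star$ then (2) gives $\varphi(h) = \overline{\varphi(h)} \in \mR$ for all $\varphi$, so $\sigma(h) \subseteq \mR$. For $(2) \Rightarrow (3)$, simply note $\varphi(1 + x^\star x) = 1 + |\varphi(x)|^2 \geq 1$, so $1 + x^\star x$ does not vanish on $M(R)$ and is therefore invertible in $R$ by the Gelfand invertibility criterion.

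The substantive step is $(3) \Rightarrow (2)$. I would first show that every self-adjoint element has real Gelfand transform. Assume for contradiction that $h = h^\star$ and some $\varphi \in M(R)$ satisfy $\varphi(h) = a + ib$ with $b \neq 0$. Form the self-adjoint element $y := (h - a \cdot 1)/|b|$, which satisfies $\varphi(y) = i\,\mathrm{sgn}(b) \in \{\pm i\}$. By hypothesis (3), $1 + y^\star y = 1 + y^2$ is invertible in $R$, yet $\varphi(1 + y^2) = 1 + \varphi(y)^2 = 1 + (\pm i)^2 = 0$, a contradiction. Hence $\widehat{h}$ is real-valued for every self-adjoint $h$, and the general case of (2) then follows by the same decomposition $x = h + ik$ used above.

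The main obstacle is precisely this rescaling step in $(3) \Rightarrow (2)$: knowing that $1 + x^\star x$ is invertible for an individual element $x$ does not by itself constrain the complex value $\varphi(x)$. One must apply the hypothesis to the translated and rescaled element $(h - a \cdot 1)/|b|$, engineered so that its Gelfand image at the offending character is exactly $\pm i$, which forces $1 + y^\star y$ into the kernel of $\varphi$ and contradicts (3).
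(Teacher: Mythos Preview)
Your proof is correct. The route differs from the paper's in one notable way: the paper does not prove $(3)\Rightarrow(2)$ directly but simply cites \cite[Theorem~2, p.59]{GelRaiShi} for the equivalence of (2) and (3), and then closes the circle by showing $(1)\Rightarrow(3)$ (using $\widehat{x^\star x}(\varphi)\geq 0$ from the definition of symmetry) and $(2)\Rightarrow(1)$ (using $\widehat{x^\star x}(\varphi)=|\varphi(x)|^2$). Your argument is fully self-contained: you handle $(1)\Leftrightarrow(2)$ via the self-adjoint decomposition $x=h+ik$, and for the substantive implication $(3)\Rightarrow(2)$ you supply the elementary rescaling trick $y=(h-a\cdot 1)/|b|$ that forces $\varphi(1+y^2)=0$ and contradicts invertibility. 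What your approach buys is independence from the external reference; what the paper's approach buys is brevity, since the Gelfand--Raikov--Shilov result is standard and citing it is entirely acceptable in this context.
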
 
\begin{proof} The equivalence of (2) and (3) is precisely
\cite[Theorem~2, p.59]{GelRaiShi}. 

Let us now show that (1) and (2) are equivalent. Suppose that (1)
holds. Let $x\in R$. Then using the fact that the spectrum of an
element is the range of its Gelfand transform, it follows that
$$
\widehat{(1+ x^\star x)}(\varphi)= 1+\widehat{x^\star x}(\varphi)\geq 1,
$$
and in particular, $\widehat{(1+ x^\star x)}(\varphi)\neq 0$ for all
$\varphi\in M(R)$. Thus $1+ x^\star x$ is invertible as an element
of $R$. So (3), and consequently also (2), holds.

Now suppose that (2) holds. Let $x\in R$. We have that 
$$
\widehat{x^\star x}(\varphi)=\varphi(x^\star x) 
=\varphi(x^\star)\varphi(x)=\overline{\varphi(x)}\varphi(x)
=|\varphi(x)|^2\geq 0.
$$
Since the spectrum of $x^\star x$ is the range of its Gelfand
transform, it follows that the spectrum of $x^\star x$ is contained in
the half line $[0,+\infty)$. Thus $R$ is symmetric.
\end{proof}

In particular, all $C^*$-algebras are symmetric.

We have the following consequence of our main result.

\begin{corollary} 
  Let $R$ be a commutative unital complex semisimple symmetric Banach
  algebra with a symmetric involution $\cdot^\star$.  Let $A\in
  R^{n\times n}$, $B\in R^{n\times m}$, $C\in R^{p\times n}$ satisfy
  the following: for all $\varphi\in M(R)$,
\begin{itemize}
\item[(i)] $(\widehat{A}(\varphi),\widehat{B}(\varphi))$ is
            stabilizable,
\item[(ii)] $(\widehat{A}(\varphi),\widehat{C}(\varphi))$ is
            detectable.
\end{itemize}
Then there exists a $P\in R^{n\times n}$ such that 
\begin{enumerate}
\item $PBB^\star P-PA-A^\star P-C^\star C=0$, 
\item $A-BB^\star P$ is exponentially stable, and
\item $P=P^\star$ and the spectrum of $P$ (as an element of the Banach
  algebra $R^{n\times n}$) is contained in $[0,+\infty)$.
\end{enumerate}
\end{corollary}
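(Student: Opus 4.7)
The plan is to reduce the corollary to Theorem~\ref{main_theorem} together with a short argument about the Gelfand transform of $P$ and $P^\star$. First I would verify that the hypotheses of the main theorem are automatic in this symmetric setting. By the equivalence ``(1) $\Leftrightarrow$ (2)'' in the proposition characterizing symmetric Banach algebras, we have $\varphi(x^\star)=\overline{\varphi(x)}$ for every $x\in R$ and $\varphi\in M(R)$. Applying this entry by entry to $A$, $B$ and $C$ gives, for each $\varphi\in M(R)$,
$$
\widehat{(A^\star)}(\varphi)=(\widehat{A}(\varphi))^{*},\quad
\widehat{(BB^\star)}(\varphi)=\widehat{B}(\varphi)(\widehat{B}(\varphi))^{*},\quad
\widehat{(C^\star C)}(\varphi)=(\widehat{C}(\varphi))^{*}\widehat{C}(\varphi),
$$
which are precisely (A1)--(A3); combined with the stabilizability and detectability assumptions (i) and (ii), all hypotheses of Theorem~\ref{main_theorem} hold. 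Applying that theorem gives a $P\in R^{n\times n}$ satisfying conclusions (1) and (2) of the corollary, and moreover $\widehat{P}(\varphi)=\Pi(\varphi)$ is Hermitian positive semidefinite for each $\varphi\in M(R)$, where $\Pi(\varphi)$ is the pointwise complex Riccati solution constructed in the proof of the main theorem.

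Next I would establish $P=P^{\star}$. Using the symmetric involution identity once more,
$$
\widehat{(P^\star)}(\varphi)=(\widehat{P}(\varphi))^{*}=(\Pi(\varphi))^{*}=\Pi(\varphi)=\widehat{P}(\varphi)
$$
for every $\varphi\in M(R)$. Hence every entry of the matrix $P-P^\star\in R^{n\times n}$ has Gelfand transform identically zero on $M(R)$; semisimplicity of $R$ then forces each such entry to be $0$, so $P=P^\star$.

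Finally, for the spectrum of $P$ viewed as an element of the Banach algebra $R^{n\times n}$, I would invoke Lemma~\ref{lemma_spectra}, which identifies this spectrum with $\bigcup_{\varphi\in M(R)}\sigma(\widehat{P}(\varphi))$. Each $\widehat{P}(\varphi)=\Pi(\varphi)$ is a positive semidefinite complex matrix, so its eigenvalues lie in $[0,+\infty)$, and taking the union over $\varphi$ gives the desired spectral inclusion. I do not anticipate a genuine obstacle: the content lies entirely in the main theorem, and the only subtle point is the appeal to semisimplicity of $R$ (applied entrywise) to pass from vanishing of $\widehat{(P-P^\star)}$ on $M(R)$ to the identity $P=P^\star$ in $R^{n\times n}$.
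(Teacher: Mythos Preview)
Your proposal is correct and follows essentially the same route as the paper: reduce to Theorem~\ref{main_theorem} via the symmetry identity $\varphi(x^\star)=\overline{\varphi(x)}$ to get (A1)--(A3), then use $\widehat{(P^\star)}(\varphi)=(\widehat{P}(\varphi))^{*}=\widehat{P}(\varphi)$ together with semisimplicity to obtain $P=P^\star$, and finally invoke Lemma~\ref{lemma_spectra} to identify the spectrum of $P$ with $\bigcup_{\varphi}\sigma(\widehat{P}(\varphi))\subset[0,+\infty)$. The only difference is that you are slightly more explicit about the appeal to semisimplicity and to Lemma~\ref{lemma_spectra}, which the paper leaves implicit.
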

\begin{proof} This is an immediate consequence of
  Theorem~\ref{main_theorem} since (A1)-(A3) are satisfied
  automatically owing to the symmetry of the Banach algebra $R$. Also
  (3) above follows from the conclusion (3) of
  Theorem~\ref{main_theorem} and the symmetry property of $R$. Indeed
  we have that for all $\varphi\in M(R)$, 
$$
\widehat{P^\star}(\varphi)=(\widehat{P}(\varphi))^*=\widehat{P}(\varphi),
$$
where the first equality follows from the symmetry of $R$ and the
second equality follows from Theorem~\ref{main_theorem}.(3). Thus
$P^\star=P$. The spectrum of $P$ coincides with the set containing the
eigenvalues of $\widehat{P}(\varphi)$ ($\varphi\in M(R)$) and since
for each $\varphi \in M(R)$, $\widehat{P}(\varphi)$ is positive
semidefinite, it follows that the spectrum of $P$ is contained in
$[0,+\infty)$.
\end{proof}

\begin{example}[Example~\ref{example_counterexample} revisited] 
Consider the same Banach algebra $C^1(\mT)$ as in 
Example~\ref{example_counterexample}, and the same Riccati equation
data $A,B,C$ given there, namely $A=z$, $B=C=1$, but now with a new
involution on $C^1(\mT)$, given simply by
$$
f^\star(z)=\overline{f(z)} \quad (z\in \mT).
$$
Now the $A$ does satisfy assumption (A1) from
Theorem~\ref{main_theorem}, since 
$$
\widehat{(A^\star)}(z)=\overline{z}= (\widehat{A}(z))^* \quad (z\in \mT).
$$
Also, as before the assumptions (A2)-(A5) are satisfied. The
corresponding Riccati equation is
$$
PBB^\star P-PA-A^*P-C^\star C=P^2 -(z+\overline{z})P-1=0,
$$
and it has a solution $P\in C^1(\mT)$, given by:
$$
P(e^{i\theta})=\cos \theta+\sqrt{(\cos \theta)^2+1} \quad (\theta\in
\mR).
$$
Clearly, $P(e^{i\theta})$ is positive semidefinite (as it is $\geq
0$). Moreover, from
$$
\widehat{(A-BB^\star P)}(\varphi)=-\sqrt{(\cos  \theta)^2+1}\leq -1,
$$ 
it follows that that $A-BB^\star P$ is exponentially stable.  
\hfill$\Diamond$
\end{example}

\section{Application to spatially invariant
  systems}\label{Applications}

In this section we discuss the applications of our results to control
problems for spatially invariant systems introduced in
\cite{BamPagDah}. The analysis of spatially invariant systems can be
greatly simplified by taking Fourier transforms, see \cite{BamPagDah},
\cite{CurIftZwa}.  This yields systems described by multiplication
operators with symbols $A,B,C\in (L^\infty(\mT))^{n\times n}$. The
Linear Quadratic Regulator (LQR) control design is to use the feedback
$F=-BB^*P$, where $P$ is the bounded, self-adjoint, stabilizing
solution to the LQR Riccati equation \eqref{riccati} on the Hilbert
space $(L^2(\mT))^n$. For the design of implementable controllers it
is important that the gain operator have a spatially decaying property
(see \cite{BamPagDah}).  This translates into the mathematical
question of when the LQR Riccati equation \eqref{riccati} has a
stabilizing solution in a suitable subalgebra (for example,
$(L^1(\mT))^{n\times n}$ is a subalgebra of ${\mathcal
  L}((L^2(\mT))^n)=(L^\infty(\mT))^{n\times n}$).  So the spatially
decaying property now translates into finding suitable subalgebras of
$(L^\infty(\mT))^{n\times n}$, in particular, the weighted Wiener
algebras. From our results in the previous sections it suffices to
identify the symmetric Wiener algebras for the case $n=1$.  In the
following example we show that a large class of Wiener subalgebras of
$L^\infty(\mT)$ do have this property.

\begin{example}[Even-weighted Wiener algebras]
\label{symex} 
Let $\alpha=(\alpha_k)_{k\in \mZ}$ be any sequence of even weights,
that is, the $\alpha_k$'s are positive real numbers satisfying
$$
\alpha_{-k}=\alpha_k \quad (k\in\mZ).
$$
Suppose, moreover that 
$$
\alpha_{k+l}\leq \alpha_k \alpha_l \quad (k,l\in\mZ).
$$
Consider the even-weighted Wiener algebra $W_\alpha(\mT)$ of the unit
circle $\mT$ given by
$$
W_\alpha(\mT)=
\Big\{f: f(z)=\sum_{k\in\mZ}f_k z^k \;(z\in \mT)
\textrm{ and } \sum_{k\in\mZ}\alpha_k |f_k|<+\infty\Big\},
$$
with pointwise operations, and the norm
$$
\|f\|_{W_\alpha(\mT)}
= 
\sum_{k\in\mZ}\alpha_k |f_k|,
\quad 
 f(z)=\sum_{k\in\mZ}f_k z^k \;(z\in \mT).
$$
Then this is a Banach algebra; see \cite[\S19.4,
p.118-120]{GelRaiShi}. The maximal ideal space of such even-weighted
Wiener algebras can be identified with the annulus 
$$
\mA(\rho)=\{z\in \mC:1/\rho \leq |z|\leq \rho\},
$$
where $\rho:= \displaystyle \inf_{k>0}
\sqrt[k]{\alpha_{k}}=\displaystyle\lim_{k\rightarrow\infty}
\sqrt[k]{\alpha_{k}}$.  The Gelfand transform is given by
$$
\widehat{f}(z)=\sum_{k\in\mZ}f_k z^k \quad (z\in\mA(\rho)).
$$
When $\rho= 1$ the weights given by $\alpha$ are said to satisfy the
{\em Gelfand-Raikov-Shilov condition} and the annulus $\mA(\rho)$
degenerates to the circle $\mT$. Examples that occur in the
applications are subexponential weights
$$
\alpha_k=e^{\alpha|k|^\beta}, \quad \alpha>0,\quad 0\leq \beta<1,
$$
and polynomial weights
$$
\alpha_k=(1+|k|)^s, \quad s\geq 0.
$$
Consider the following involution:
$$
f^\star(z)=\overline{f\left(\displaystyle \frac{1}{\overline{z}}\right)}
 \quad 
(z\in \mA(\rho),\; f\in W_\alpha(\mT)).
$$
Note that under the Gelfand-Raikov-Shilov condition the involution
$\cdot^{\star}$ reduces to the following:
$$
f^{\star}(z)= \overline{f(z)} \quad (z\in \mT, \;f\in W_\alpha(\mT)),
$$ 
and with this involution $W_\alpha(\mT)$ is a symmetric Banach
algebra.  Then for matrices $A,B,C$ with entries from $W_\alpha(\mT)$
the assumptions (A1)-(A3) of Theorem~\ref{main_theorem} are
automatically satisfied.  \hfill$\Diamond$
\end{example}

We remark that if the Gelfand-Raikov-Shilov condition is not
satisfied, then $W_\alpha(\mT)$ is not a symmetric algebra with the
involution considered in the previous example.

In the case of spatially invariant systems when the spatial argument
is continuous (rather than discrete), the state space is
$(L^2(\mR))^n$ (as opposed to $(L^2(\mT))^n$ or $(\ell^2(\mZ))^n$ in
the discrete case of the previous example), and the spatially decaying
property reduces to asking that the Riccati equation solution
belong to appropriate subalgebras of $(L^\infty (\mR))^{n\times n}$.
In this context, the following example is relevant.

\begin{example}[Even-weighted Wiener algebra of the line]
  Let the weight $w:\mR\rightarrow (0,+\infty)$ be a
  continuous function satisfying
$$
w(t+\tau)\leq w(t) w(\tau) \textrm{ and }w(-t)=w(t) \quad (t,\tau \in
\mR) .
$$
Let $L^1(\mR,w)$ be the set of all Lebesgue measurable complex valued
functions on $\mR$ such that
$$
\|f\|_{L^1(\mR,w)}=\int_\mR |f(t)|w(t)dt<+\infty.
$$
Then $L^(\mR,w)$ is a Banach algebra with this norm, with pointwise
addition and scalar multiplication, and with the multiplication
operation taken to be convolution:
$$
(f\ast g)(t)= \int_\mR f(t-\tau) g(\tau) d\tau\quad (t\in \mR).
$$
We adjoin a unit to the Banach algebra $L^1(\mR,w)$ to obtain its
unitization, denoted by $L^1(\mR,w)+\mC$ (see for instance
\cite[p.246]{Rud}), and the norm of an element $(f,\zeta)\in
L^1(\mR,w)+\mC$ is given by
$$
\|(f,\zeta)\|=\|f\|_{L^1(\mR,w)}+|\zeta|.
$$
It can be shown that (see \cite[(vii), p.816]{GohGolKaa}) 
$$
\tau:= \inf_{t<0} \frac{\log w(t)}{-t} =
\lim_{t\rightarrow -\infty} \frac{\log w(t)}{-t} \; \;(\geq 0)
$$
and that every complex homomorphism $\varphi$ on $L^1(\mR,w)+\mC$ is
either of one of the following types:
\begin{itemize}
\item[$\underline{1}^\circ$] $L^1(\mR,w)\subset \ker \varphi$. In this case, 
$$
\varphi((f,\zeta))=\varphi_\infty((f,\zeta))
:=\zeta \quad (f\in L^1(\mR,w),\;\zeta \in \mC).
$$
\item[$\underline{2}^\circ$] It is not the case that
  $L^1(\mR,w)\subset \ker \varphi$. In this case, $\varphi$ is a
  nontrivial multiplicative linear functional on $L^1(\mR;w)$, and so
  (see \cite[p.74]{Loo}), there exists a complex number $\lambda$ such
  that $-\tau \leq \textrm{Im}(\lambda)\leq \tau$ and
$$
\varphi((f,0))=\int_\mR f(t)e^{i\lambda t}dt \quad (f\in L^1(\mR,w)).
$$
Consequently,
$$
\quad\quad\quad\quad
\varphi((f,\zeta))=\varphi_\lambda((f,\zeta)):=\int_\mR
f(t)e^{i\lambda t}dt +\zeta \quad (f\in L^1(\mR,w),\;\zeta \in \mC).
$$
\end{itemize}
The Gelfand transform of $(f,\zeta)$ is given by 
$$
\widehat{(f,\zeta)}(\varphi)=\left\{\begin{array}{ll} 
\displaystyle \int_\mR f(t)e^{i\lambda t}dt +\zeta
& \textrm{if } \varphi=\varphi_\lambda,\;-\tau \leq
  \textrm{Im}(\lambda)\leq \tau,\\
\zeta & \textrm{if } \varphi=\varphi_\infty.
\phantom{\displaystyle \int_\mR f(t)e^{i\lambda t}dt}\end{array}
\right. 
$$
Consider the involution given by 
$$
(f,\zeta)^\star=(\overline{f(-\cdot)},\overline{\zeta}).
$$
The weight $w$ is said to satisfy the {\em Gelfand-Raikov-Shilov
  condition} if $\tau=0$. In this case, it is easy to check that 
$$
\widehat{(f,\zeta)^\star}(\varphi)= \overline{\widehat{(f,\zeta)}(\varphi)},
$$
for all $\varphi \in M(L^1(\mR,w)+\mC)$, $f\in L^1(\mR,w)$ and $\zeta
\in \mC$.  Thus $L^1(\mR,w)+\mC$ is a symmetric Banach algebra with
this involution. Hence for matrices $A,B,C$ with entries from
$L^1(\mR,w)+\mC$, the assumptions (A1)-(A3) of
Theorem~\ref{main_theorem} are then automatically satisfied.
\hfill$\Diamond$
\end{example}

\end{document}